\newtheorem{lemma}{Lemma}[section]
\theoremstyle{definition}
\newtheorem{defn}[lemma]{Definition}
\theoremstyle{remark}
\newtheoremstyle{letter}{}{}{\itshape}{}{\bfseries}{.}{.5em}{#1\thmnote{ #3}}
\theoremstyle{letter}
\newtheorem*{letteredthm}{Theorem}
\author{\tiny{Alexander Kolpakov}}
\address{\tiny{Institut de math\'ematiques, Rue Emile-Argand 11, 2000 Neuch\^atel, Switzerland}}
\email{\tiny{kolpakov (dot) alexander (at) gmail (dot) com}}
\author{\tiny{Alexey Talambutsa}}
\address{\tiny{Steklov Mathematical Institute of RAS, 8 Gubkina St., 119991 Moscow, Russia}}
\address{\tiny{National Research University Higher School of Economics, 3 Kochnovsky Proezd, 125319 Moscow, Russia}}
\email{\tiny{alexey (dot) talambutsa (at) gmail (dot) com}}
\title[Spherical and geodesic growth rates of RACG's and RAAG's are Perron numbers]{Spherical and geodesic growth rates of right-angled Coxeter and Artin groups are Perron numbers}
\begin{document}

\vspace*{-1\baselineskip}

\begin{abstract}
We prove that for any infinite right-angled Coxeter or Artin group, its spherical and geodesic growth rates (with respect to the standard generating set) either take values in the set of Perron numbers, or equal $1$. Also, we compute the average number of geodesics representing an element of given word-length in such groups.\\

\noindent
\textit{Key words: } Coxeter group, Artin group, graphs of groups, RACG, RAAG, Perron number, algebraic integer, finite automaton.\\

\noindent
\textit{2010 AMS Classification: } 57R90, 57M50, 20F55, 37F20. \\
\end{abstract}

\maketitle

\vspace*{-2\baselineskip}

\section{Introduction}
A classical formula obtained by Steinberg in 1968, c.f. \cite{Steinberg}, shows that the growth series of a Coxeter group (with respect to its standard generating set consisting of involutions) is a rational function, hence the growth rates of these groups are algebraic numbers. In 1980's Cannon discovered a remarkable connection between Salem polynomials and growth functions of surface groups and some cocompact Coxeter groups of ranks $3$ and $4$. Even though these results were published much later in the paper \cite{CaWa}, the initial preprint spawned the studies by other authors, who established that in many cases the growth rates of cocompact and cofinite Coxeter groups are either Salem or Pisot numbers with most notable results obtained in the works \cite{Floyd, Parry}. However, the classes of Salem and Pisot numbers appear to be somewhat narrow, since growth rates of many cocompact and cofinite hyperbolic Coxeter groups do not belong there. As it was shown in \cite[Theorem~4.1]{KePe}, in many cases these growth rates reside in a wider class of Perron numbers, and it was conjectured \cite[p. 1301]{KePe} that this is the case for all Coxeter groups acting cocompactly on hyperbolic spaces as reflection groups. The actual conjecture describes a detailed distribution of the poles of the associated growth series, and it implies that the growth rate is a Perron number. Several results confirming the latter fact have appeared recently in \cite{Kolpakov, KoYu, NoKe, Umemoto, Yu1, Yu2}.

The geodesic growth functions of Coxeter groups have also attracted some attention in the recent works \cite{AC, CK}. However, there are fewer methods available for computing them, e.g. there is no analogue of such a convenient tool as Steinberg's formula. Thus, the number-theoretic properties of geodesic growth rates still remain less understood.

In the present work we show that the spherical and geodesic exponential growth rates of infinite right-angled Coxeter groups (RACGs) and right-angled Artin groups (RAAGs) are Perron numbers, besides the cases when they equal $1$. Namely, in the case of RACGs the following theorems hold.

\begin{letteredthm}[A]\label{thm-A}
Let $G$ be an infinite right-angled Coxeter group with defining graph $\Gamma$. Then the spherical exponential growth rate $\alpha(G)$ of $G$ with respect to its standard set of generators determined by $\Gamma$ is either $1$, or a Perron number.
\end{letteredthm}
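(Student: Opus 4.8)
The plan is to realise the spherical growth coefficients $a_n=\#\{g\in G:\ |g|=n\}$ as a path count in a finite automaton and then pin down $\alpha(G)=\limsup_n a_n^{1/n}$ as a Perron--Frobenius eigenvalue. Fix a linear order on the vertices of $\Gamma$. Since $G$ is the graph product over $\Gamma$ of copies of $\matZ/2\matZ$, it is shortlex automatic with respect to the standard generators, so the set $L$ of shortlex normal forms is a regular language, and the map $g\mapsto(\text{normal form of }g)$ is a length-preserving bijection, whence $L$ has exactly $a_n$ words of length $n$. I would take a trim deterministic automaton $\mathcal A$ recognising $L$ and let $M$ be its transition matrix, a square matrix with nonnegative integer entries. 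The standard transfer-matrix estimate then gives $\alpha(G)=\max_{\mathcal C}\rho(M|_{\mathcal C})$, the maximum ranging over the strongly connected components $\mathcal C$ of $\mathcal A$ and $\rho$ denoting spectral radius. In particular $\alpha(G)$ is an eigenvalue of a nonnegative integer matrix, hence a real algebraic integer with $\alpha(G)\ge|\alpha'|$ for every Galois conjugate $\alpha'$; the content of the theorem is to make this inequality strict, or to see that $\alpha(G)=1$.

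If $\alpha(G)=1$ we are done, so assume $\alpha(G)>1$ and fix a component $\mathcal C$ with $\rho(M|_{\mathcal C})=\alpha(G)$. Then $M|_{\mathcal C}$ is an irreducible nonnegative integer matrix, so by Perron--Frobenius $\alpha(G)$ is its dominant eigenvalue; and if $M|_{\mathcal C}$ were moreover \emph{primitive} (aperiodic), then $\alpha(G)$ would be a simple eigenvalue strictly larger in modulus than every other eigenvalue of $M|_{\mathcal C}$. Since $\alpha(G)$ is a root of the monic integer polynomial $\det(xI-M|_{\mathcal C})$, its minimal polynomial divides the latter, so every Galois conjugate of $\alpha(G)$ appears among the eigenvalues of $M|_{\mathcal C}$ and hence has modulus strictly below $\alpha(G)$ --- which is precisely the assertion that $\alpha(G)$ is a Perron number. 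The theorem thus reduces to the claim that when $\alpha(G)>1$ the dominant component $\mathcal C$ may be chosen primitive.

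Proving this --- equivalently, exhibiting a primitive nonnegative integer matrix with spectral radius $\alpha(G)$ --- is the main obstacle, and it is exactly where the combinatorics of right-angled Coxeter groups must be used: a generic gapless regular language can well have growth rate $2^{1/3}$ (take $\tfrac{1+t+t^2}{1-2t^3}$), which is \emph{not} Perron because of its conjugates $2^{1/3}e^{\pm 2\pi i/3}$, so soft arguments cannot suffice. I would establish primitivity of $\mathcal C$ by producing, through a single state $q\in\mathcal C$, two closed paths of coprime lengths. Exponential growth already forces some state from which two distinct continuations extend to normal forms, hence a nontrivial cycle in $\mathcal C$; the remaining task is a second cycle of coprime length. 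For this one exploits the structure of geodesics in $G$: since $G$ is infinite, $\Gamma$ is not complete, so there exist two non-commuting standard generators; splicing a short ``detour'' built from them into a normal form, while tracking the shortlex condition and which states of $\mathcal A$ the detour visits, lets one return to $q$ after adding a controlled number of letters --- in particular numbers of both parities --- which yields the required coprime cycle lengths and hence aperiodicity.

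Once $\mathcal C$ is known to be primitive, the computation of the second paragraph applies verbatim and shows that $\alpha(G)$ is a real algebraic integer exceeding all of its conjugates in modulus, i.e.\ a Perron number; together with the case $\alpha(G)=1$ this proves the theorem. A tidier way to organise the write-up would be to isolate the RACG input in a single lemma --- ``the shortlex language of an infinite RACG with $\alpha(G)>1$ admits a bounded gluing that can change length parity'' --- and to prove once and for all that any regular language with such a gluing has Perron growth rate, after which only the verification of the gluing property from the presentation of $G$ remains.
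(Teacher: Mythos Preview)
Your high-level strategy---realise $a_n$ as a path count in an automaton, show the dominant strongly connected component has a primitive transfer matrix, and invoke Perron--Frobenius---is exactly the paper's. The gap is that you have not actually proved primitivity: ``splicing a short detour\ldots lets one return to $q$ after adding a controlled number of letters---in particular numbers of both parities'' is a hope, not an argument. In a RACG every generator is an involution, so no geodesic contains $vv$ and the shortlex automaton has no self-loops; a ``detour of length $1$'' is therefore impossible, and your parity-change idea needs a genuine odd cycle. Knowing only that two generators fail to commute produces a $2$-cycle $\{u\}\to\{v\}\to\{u\}$ but no odd one, so your sketch as stated does not yield coprime cycle lengths.

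The paper closes this gap by working with a \emph{specific} automaton whose non-fail states are exactly the cliques of $\Gamma$ (together with $\emptyset$), and by first reducing, via the direct-product decomposition of $G$ along connected components of $\overline{\Gamma}$, to the case where $\overline{\Gamma}$ is connected on at least three vertices (using that a maximum of Perron numbers is again Perron). With the clique description in hand one shows that the \emph{entire} automaton minus the start state is strongly connected: the singleton states $\{v\}$ are mutually reachable because $\overline{\Gamma}$ is connected, and any higher-cardinality clique state can be pushed down to a singleton by walking along a spanning tree of $\overline{\Gamma}$. Aperiodicity is then explicit: a path $u\,$--$\,v\,$--$\,w$ of length $2$ in $\overline{\Gamma}$ (which exists once $|V\Gamma|\ge 3$ and $\overline{\Gamma}$ is connected) yields simultaneously a $2$-cycle $\{u\}\to\{v\}\to\{u\}$ and a $3$-cycle $\{u\}\to\{w\}\text{ or }\{u,w\}\to\{v\}\to\{u\}$, giving $\gcd=1$. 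You should either adopt this explicit automaton or supply a concrete odd cycle in your chosen component; as written, the proposal halts precisely at the step that carries all the content.
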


\begin{letteredthm}[B]\label{thm-B}
Let $G$ be an infinite right-angled Coxeter group with defining graph $\Gamma$. Then the geodesic exponential growth rate $\beta(G)$ of $G$ with respect to its standard set of generators determined by $\Gamma$ is either $1$, or a Perron number.
\end{letteredthm}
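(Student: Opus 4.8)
\medskip

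\noindent\textbf{Proof proposal.} The plan is to realise $\beta(G)$ as the spectral radius $\rho(M)$ of an explicit nonnegative integer matrix $M$ coming from a finite geodesic automaton, and then to run a Perron--Frobenius argument; since there is no Steinberg-type formula for geodesic growth (cf. the Introduction), the automaton is indispensable here. First I would invoke the Tits--type geodesic criterion for right-angled Coxeter groups: a word $s_{i_1}\cdots s_{i_n}$ in the standard generators is geodesic if and only if it contains no pair of equal letters $s_{i_k}=s_{i_l}$ with $k<l$ all of whose intermediate letters $s_{i_m}$ ($k<m<l$) are adjacent to $s_{i_k}$ in $\Gamma$. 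Tracking, after a geodesic prefix $u$ has been read, its right descent set $R(u)=\{t\in S:\ell(ut)<\ell(u)\}$ --- which for a RACG is always a clique of $\Gamma$ and satisfies $R(\varepsilon)=\varnothing$ and $R(us)=\{s\}\cup(R(u)\cap\operatorname{lk}(s))$ when $s\notin R(u)$, while $us$ fails to be geodesic when $s\in R(u)$ --- produces a finite deterministic automaton $\mathcal A$ whose states are the cliques of $\Gamma$, all reachable from $\varnothing$ and all accepting. Hence the geodesic growth series of $G$ equals the rational function $\mathbf e_{\varnothing}^{\mathsf T}(I-xM)^{-1}\mathbf 1$, where $M$ is the nonnegative integer matrix counting, for each ordered pair of cliques, the generators effecting the corresponding $R$-transition; consequently $\beta(G)=\rho(M)$ is a nonnegative algebraic integer, and $\beta(G)\ge 1$ because an infinite $G$ has infinitely many geodesic words. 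The case $\beta(G)=1$ (e.g. $G\cong\matZ/2\ast\matZ/2$, with exactly two geodesics of each length) requires nothing further, so I assume $\beta(G)>1$.

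Next I would pass to a strongly connected component $C$ of $\mathcal A$ on which $\rho(M)$ is attained; its transition matrix $M_C$ is irreducible, so by Perron--Frobenius $\beta(G)$ is a simple real eigenvalue of $M_C$ dominating all other eigenvalues of $M_C$ in modulus. Since the minimal polynomial of $\beta(G)$ divides the characteristic polynomial of $M_C$, every Galois conjugate of $\beta(G)$ is an eigenvalue of $M_C$ and hence has modulus $\le\beta(G)$; thus $\beta(G)$ is at least a weak Perron number. To upgrade this to strict domination it suffices to show that $M_C$ is primitive (aperiodic): the poles of the rational geodesic growth series form a Galois-stable finite set (the roots of a denominator in $\matZ[x]$), so if $1/\beta(G)$ is the unique pole of smallest modulus --- which is exactly what primitivity of the dominant component guarantees --- then $|1/\beta'|>|1/\beta(G)|$, i.e. $|\beta'|<\beta(G)$, for every conjugate $\beta'\ne\beta(G)$, and $\beta(G)$ is a Perron number. (Equivalently, one may quote that the spectral radius of a primitive nonnegative integer matrix is always Perron.)

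The main obstacle is precisely this primitivity of the dominant component, and here the shape of the geodesic automaton is the essential input: the target of any non-dead transition contains the letter just read whereas its source does not, so no state carries a self-loop, and a short computation with cliques shows that a strongly connected component supported on only two states is a single $2$-cycle with at most one edge in each direction, hence of spectral radius $\le 1$. Therefore $\beta(G)>1$ already forces the dominant component $C$ to have at least three states, and the remaining and most delicate point is to exhibit inside $C$ two closed paths through a common state whose lengths are coprime --- a $2$-cycle coming from a pair of non-commuting generators, together with an odd cycle built using the extra states of $C$ --- so that $M_C$ is aperiodic. To make this last step manageable I would first reduce, via the standard splittings, to the case that $\Gamma$ is connected and not a join: if $\Gamma$ is disconnected then $G$ is a free product, and if $\Gamma$ is a join then $G$ is a direct product of RACGs on proper subgraphs, and in either case the geodesic growth series of $G$ is determined by those of the factors through explicit product formulas, so it is enough to treat the connected non-join case, which is exactly where the needed connectivity and aperiodicity statements about $\mathcal A$ are cleanest. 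Putting these pieces together gives that $\beta(G)$ is a Perron number whenever it exceeds $1$, which is Theorem~B.
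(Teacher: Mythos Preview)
Your overall plan---build the geodesic automaton on cliques, identify $\beta(G)$ with the spectral radius of its transfer matrix, and invoke Perron--Frobenius once primitivity is established---is exactly the paper's strategy, and your description of the automaton and its transitions matches the paper's $\mathcal{B}$. The substantive differences are in how you get primitivity, and this is where your proposal is both more complicated than necessary and incomplete.

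The paper does \emph{not} pass to a dominant strongly connected component. After reducing (via direct products only) to the case that $\overline{\Gamma}$ is connected with at least three vertices, it shows that the entire automaton $\mathcal{B}\setminus\{\varnothing\}$ is strongly connected. The argument is short: the singleton states $\{v\}$ form a strongly connected set because $\overline{\Gamma}$ is connected (if $u,v$ are nonadjacent in $\Gamma$ then $\delta(\{u\},v)=\{v\}$); every state is reachable from some singleton by construction; and every state $s$ with $|s|\ge 2$ reaches a strictly lower-level state by choosing a spanning tree of $\overline{\Gamma}$ and reading off the neighbour of $\min(s)$ on the tree-path to the second-smallest element of $s$. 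Once you know there is a single strongly connected component, you do not need your detour through ``$|C|\ge 3$'' or your unproved claim about at most one edge between any two states.

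For aperiodicity the paper gives the concrete construction you are missing. Since $\overline{\Gamma}$ is connected on $\ge 3$ vertices, it contains a path $u\text{--}v\text{--}w$; then the singleton $\{u\}$ lies on the $2$-cycle $\{u\}\to\{v\}\to\{u\}$ and on a $3$-cycle $\{u\}\to\delta(\{u\},w)\to\{v\}\to\{u\}$ (with $\delta(\{u\},w)$ equal to $\{u,w\}$ or $\{w\}$ according to whether $u$ and $w$ commute). This single computation replaces your vague ``odd cycle built using the extra states of $C$''.

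Finally, your proposed reduction to ``$\Gamma$ connected and not a join'' via free products is not needed and introduces an extra obligation: you would have to argue that the free-product formula for geodesic growth series preserves the Perron property, which is not immediate. The paper only reduces along direct products ($\Gamma$ a join, i.e.\ $\overline{\Gamma}$ disconnected), where $\beta(G_1\times G_2)=\beta(G_1)+\beta(G_2)$ and the closure of Perron numbers under addition (Lind) finishes the reduction cleanly.
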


Analogous results hold for RAAGs and their growth rates.

\begin{letteredthm}[C]\label{thm-C}
Let $G$ be a right-angled Artin group with defining graph $\Gamma$. Then the spherical exponential growth rate $\alpha(G)$ of $G$ with respect to its standard symmetric set of generators determined by $\Gamma$ is either $1$, or a Perron number.
\end{letteredthm}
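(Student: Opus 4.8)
The plan is to reduce the spherical growth rate of a RAAG $G = A(\Gamma)$ to a question about the Perron--Frobenius eigenvalue of a nonnegative integer matrix arising from an automaton that enumerates geodesic normal forms. The starting point is the classical fact that RAAGs are biautomatic: using the standard generating set $S = V(\Gamma)^{\pm 1}$, the language of ShortLex-normal forms (equivalently, the set of geodesic representatives minimal in the ShortLex order) is regular, and there is a finite-state automaton $\mathcal{A}$ accepting exactly one geodesic per group element. The spherical growth series of $G$ is then the generating function counting accepted words by length, so $\alpha(G)$ equals the inverse of the radius of convergence of that series, which is governed by the transition matrix $M$ of $\mathcal{A}$: it is a nonnegative integer matrix and $\alpha(G)$ is its spectral radius, hence an algebraic integer that is either $1$ (polynomial or trivial growth) or $> 1$.

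The next step is to show that when $\alpha(G) > 1$ this spectral radius is in fact a Perron number, i.e. a real algebraic integer strictly greater in modulus than all its Galois conjugates. The standard route is to pass to a suitable \emph{strongly connected} component of the automaton. Concretely, decompose $\mathcal{A}$ into strongly connected components; the growth rate is realized as the maximal Perron--Frobenius eigenvalue $\lambda$ of the transition matrix $M_C$ of one such component $C$, which is an irreducible (indeed one should argue primitive, i.e.\ aperiodic) nonnegative integer matrix. By Perron--Frobenius, $\lambda$ is a simple real eigenvalue of $M_C$ equal to its spectral radius. To conclude that $\lambda$ dominates \emph{all} its algebraic conjugates — not merely the other eigenvalues of $M_C$ — one invokes the now-standard argument (used in the cited works on Coxeter growth rates): every Galois conjugate of $\lambda$ is also an eigenvalue of the integer matrix $M_C$, and by Perron--Frobenius these all have modulus at most $\lambda$; aperiodicity of $M_C$ rules out other eigenvalues of modulus exactly $\lambda$, so $|\lambda'| < \lambda$ for every conjugate $\lambda' \neq \lambda$. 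Hence $\lambda = \alpha(G)$ is a Perron number.

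The main obstacle, and the place where the right-angled hypothesis and the structure of $\Gamma$ really enter, is the passage to a primitive component: one must show that the dominant strongly connected component of the ShortLex automaton can be chosen aperiodic, or else handle directly the borderline case where the only strongly connected components carrying exponential growth are periodic (which would make $\alpha(G)$ only a root of a Perron number, not necessarily Perron itself). I expect the resolution to use the combinatorial geometry of RAAGs — the description of geodesics via the pile-of-boxes / heaps-of-pieces model and the "domino" moves relating two geodesic words for the same element — to exhibit, inside the growing component, short loops of coprime lengths based at a common state, forcing aperiodicity; alternatively, one decomposes $\Gamma$ along its join/disjoint-union structure (a RAAG over a disconnected graph is a free product, over a join a direct product) and argues inductively, checking that free products and direct products of groups with Perron growth rate again have Perron growth rate, with the trivial/polynomial cases ($\Gamma$ complete, giving $G = \matZ^n$) accounting for the growth rate $1$. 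Reconciling the automaton-theoretic and the graph-decomposition viewpoints, and in particular verifying aperiodicity uniformly, is the step I expect to require the most care.
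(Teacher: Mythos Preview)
Your automaton/Perron--Frobenius framework is exactly the one the paper uses for RACGs (Theorems~A and~B), and the step you single out as delicate---primitivity of the relevant component of the ShortLex automaton---is indeed the heart of the matter there. But for RAAGs the paper does \emph{not} redo this analysis. Instead it invokes the doubling construction of Droms--Servatius: to a RAAG $G$ on $\Gamma$ one associates a RACG $G^{\pm}$ on the graph $\Gamma^{\pm}$ obtained by replacing each vertex $v$ by a pair $v^{+},v^{-}$ and each edge by the four edges between the corresponding pairs. There is then an explicit length-preserving bijection $\sigma$ between $\mathrm{ShortLex}(G)$ and $\mathrm{ShortLex}(G^{\pm})$ (and likewise for geodesic languages), so $\alpha(G)=\alpha(G^{\pm})$ and Theorem~C follows immediately from Theorem~A. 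This bypasses entirely the need to verify primitivity for the RAAG automaton.

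Your direct route is viable in principle: one can write down the analogous automaton for the RAAG (states are cliques in $\Gamma$ decorated with signs, transitions as in the RACG case with an extra rule forbidding $vv^{-1}$), and the paper's Lemmas on singleton-connectivity, level-reduction, and coexisting $2$- and $3$-cycles should adapt. The paper's reduction is simply slicker because it reuses the RACG argument wholesale. One caution about your alternative inductive decomposition: the direct-product step is fine since $\alpha(G_1\times G_2)=\max\{\alpha(G_1),\alpha(G_2)\}$, but the free-product step is not---$\alpha(G_1*G_2)$ is determined by a pole of $1/f_{G_1}+1/f_{G_2}-1$ and is not in any obvious way a Perron number just because $\alpha(G_1)$ and $\alpha(G_2)$ are. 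So that branch of your plan would not close without substantial extra work, and the automaton (or doubling) approach is the one to pursue.
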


\begin{letteredthm}[D]\label{thm-D}
Let $G$ be a right-angled Artin group with defining graph $\Gamma$. Then the geodesic exponential growth rate $\beta(G)$ of $G$ with respect to its standard symmetric set of generators determined by $\Gamma$ is either $1$, or a Perron number.
\end{letteredthm}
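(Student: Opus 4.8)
The plan is to present $\beta(G)$ as the spectral radius of a non-negative integer matrix built from a finite automaton recognising geodesics, and then to show that the matrix block governing the growth can be chosen primitive, so that the ``easy half'' of Lind's characterisation of Perron numbers applies. I would start from the combinatorial description of geodesics in $G=A_\Gamma$ over the symmetric generating set $S=\{v^{\pm 1}:v\in V(\Gamma)\}$: a word $s_1\cdots s_n$ is \emph{not} geodesic precisely when it contains a cancelling pair, i.e.\ indices $i<j$ with $\{s_i,s_j\}=\{x,x^{-1}\}$ for some $x\in V(\Gamma)$ and with every $s_k$, $i<k<j$, lying in $\{y^{\pm 1}:y\in\mathrm{lk}(x)\}$. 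Whether such a pair exists depends only on bounded memory, so the language of geodesics is regular; explicitly it is recognised by the automaton whose states are the maps $\sigma:V(\Gamma)\to\{+,-,0\}$ recording, for each vertex $x$, whether there is a still-uncancellable occurrence of $x$, of $x^{-1}$, or neither, with a single dead state and all live states accepting (every prefix of a geodesic is geodesic). Keeping only the states reachable from the initial state $\sigma\equiv 0$ gives a finite automaton $\mathcal A_\Gamma$ with non-negative integer transition matrix $M$, and the number of geodesics of length $n$ equals $e_{\sigma_0}^{\top}M^{n}\mathbf 1$; hence $\beta(G)=\rho(M)=\rho(M_{\mathcal C})$, where $\mathcal C$ is the strongly connected component of $\mathcal A_\Gamma$ of largest spectral radius.

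Now $M_{\mathcal C}$ is an irreducible non-negative integer matrix (when $\beta(G)>1$ the component $\mathcal C$ carries a cycle, so it is either a multi-vertex component or a single self-looped vertex, in either case irreducible). By the Perron--Frobenius theorem, if in addition $M_{\mathcal C}$ is aperiodic then $\rho(M_{\mathcal C})$, being a simple eigenvalue strictly larger in modulus than every other eigenvalue of the integer matrix $M_{\mathcal C}$, has all its Galois conjugates (which lie among those eigenvalues) strictly smaller, hence is a Perron number; the only alternative is $\beta(G)=1$. The decisive structural feature of RAAGs here is that $\mathcal A_\Gamma$ always possesses self-loops: the geodesic word $xx$ fixes every state $\sigma$ supported on $\{x\}\cup\mathrm{lk}(x)$ with $\sigma(x)\neq 0$. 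So it suffices to prove that $\mathcal C$ contains such a self-looped state, which yields aperiodicity and finishes the argument.

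To organise this I would induct on $|V(\Gamma)|$ using the graph-of-groups structure of $G$. If $\Gamma$ is a join $\Gamma_1*\Gamma_2$ then $G=A_{\Gamma_1}\times A_{\Gamma_2}$; a word over $S$ is geodesic iff both of its letter-projections are, so counting by the positions of $\Gamma_1$-letters gives a binomial convolution and $\beta(G)=\beta(A_{\Gamma_1})+\beta(A_{\Gamma_2})$. A sum of two numbers each equal to $1$ or Perron is again Perron: if one summand is $1$, pass from a primitive integer matrix realising the other to that matrix plus the identity; if both are Perron, take a Kronecker sum of two primitive matrices (its spectral radius is the sum, and its conjugates are dominated because the conjugates of each summand are). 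If $\Gamma$ is disconnected then $G$ is a free product of the $A_{\Gamma_i}$, its geodesics are the alternating concatenations of non-trivial geodesics of the factors, and assembling $\mathcal A_\Gamma$ from the factor automata gives a matrix that, on its dominant component, inherits a self-loop from some factor and is therefore primitive; its spectral radius $\beta(G)$ exceeds $\max_i\beta(A_{\Gamma_i})\ge 1$, hence is Perron. The base case $\Gamma$ a single vertex is $G=\matZ$ with $\beta(G)=1$.

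The remaining and, I expect, hardest case is that of a prime defining graph --- $\Gamma$ connected and not a join --- where the decomposition gives no leverage and one must argue directly with $\mathcal A_\Gamma$. The task is purely combinatorial: show that the strongly connected component of $\mathcal A_\Gamma$ of maximal spectral radius meets the set of self-looped states. I would attack this by producing, for a prime $\Gamma$, a recurrent family of geodesics --- for instance words that repeatedly pass through a well-chosen vertex $x$ and its link --- that simultaneously realises the exponential growth rate and returns $\mathcal A_\Gamma$ to a self-looped state, so that state genuinely lies in $\mathcal C$. Once this is established for prime $\Gamma$, the induction closes and Theorem~D follows; the analogous automaton (with states in $\{0,1\}^{V(\Gamma)}$) and the same scheme handle Theorem~B.
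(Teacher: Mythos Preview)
Your approach is genuinely different from the paper's. The paper does not analyse a RAAG automaton at all: it invokes the Droms--Servatius doubling $G\mapsto G^{\pm}$, which replaces each generator $v$ by a pair $v^{+},v^{-}$ and produces a RACG $G^{\pm}$ with the same geodesic (and shortlex) counting function as $G$; Theorem~D is then literally Theorem~B applied to $G^{\pm}$. Your plan, by contrast, works directly with the RAAG geodesic automaton and tries to show primitivity of its dominant component. That is a legitimate route and, because RAAG generators have infinite order, it even offers an advantage: singleton states carry honest self-loops (read $v$ again), so aperiodicity comes for free, whereas in the RACG case the paper must manufacture coprime $2$- and $3$-cycles.

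The gap is that your ``prime $\Gamma$'' case is the whole theorem, and you leave it as a sketch. Hunting for a family of geodesics that both realises the growth rate and visits a self-looped state is harder than necessary and not obviously well-posed. The cleaner idea --- and what the paper does in the RACG setting --- is to prove that the entire automaton minus the start state is a \emph{single} strongly connected component whenever $\overline{\Gamma}$ is connected, so there is no ``dominant component'' to locate. Concretely: the reachable live states of your automaton are exactly the signed cliques of $\Gamma$; the singletons $\{v^{\varepsilon}\}$ are mutually reachable because $u,v$ non-adjacent in $\Gamma$ gives $\delta(\{u^{\varepsilon}\},v^{\eta})=\{v^{\eta}\}$, and $\overline{\Gamma}$ is connected; every higher-level clique descends to a singleton by the spanning-tree argument of the paper's Lemma~4.3 (pick a vertex of $\overline{\Gamma}$ adjacent to the clique and read it). Once that is in place, your self-loop observation at any singleton finishes primitivity immediately. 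Note also that your separate ``$\Gamma$ disconnected / free product'' case is subsumed by this: $\Gamma$ disconnected forces $\overline{\Gamma}$ connected, so only the join case genuinely needs the $\beta(G_1\times G_2)=\beta(G_1)+\beta(G_2)$ reduction (for which you can cite Lind rather than improvise a Kronecker-sum argument).
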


The original conjecture by Kellerhals and Perren has been confirmed in several cases \cite{Kolpakov, KoYu, NoKe, Umemoto} by applying Steinberg's formula \cite{Steinberg} and with extensive use of hyperbolic geometry, notably Andreev's theorem \cite{Andreev-1, Andreev-2}. Recently in \cite{Yu1, Yu2} it was established that the growth rates of all $3$-dimensional hyperbolic Coxeter groups are Perron numbers. In the present paper, we prove that the spherical and geodesic growth rates of RACGs and RAAGs are also Perron numbers, even when there is no cocompact or finite covolume action. Moreover, all hyperbolic right-angled polytopes in dimensions $n = 2$ \cite[Theorem 7.16.2]{Beardon} and $n = 3$ \cite{Pogorelov} are classified, while no such polytopes exist in dimensions $n \geq 5$ \cite{PV}. The only known right-angled hyperbolic polytopes in dimension $n = 4$ are the Coxeter $120$-cell \cite{Kellerhals} and its ``garlands'' obtained by glueing several such polytopes along appropriate facets.

Hence, our methods of proof are not related to the geometry of the group action, and rather use the structure of the group considered as a formal language: namely, we consider the corresponding finite state automaton, following the works by Brink and Howlett \cite{BH} and Loeffler, Meier, and Worthington \cite{LMW}. Also, we would like to mention that Theorem A and Theorem C can be deduced from the results of Sections~10--11 in \cite{GTT}, where a different automaton, essentially due to Hermiller and Meier \cite{HM}, has been considered. Much earlier, similar results for the spherical growth rates of partially commutative monoids were obtained in \cite{LR}.

The properties of the automata used in the present work allow us to show the following fact that describes how many geodesics ``on average'' represent an element of word-length $n$. Let us write $a_n \sim b_n$ for a pair of sequences of positive real numbers indexed by integers if $\lim_{n\to \infty} \frac{a_n}{b_n} = 1$.

\begin{letteredthm}[E]\label{thm-E}
Let $G$ be either an infinite right-angled Coxeter group with defining graph $\Gamma$ whose complement $\overline{\Gamma}$ is \textit{not} a union of a complete graph and an empty graph\footnote{Here and further an empty graph means a graph with some (possibly none) vertices and no edges.}, or a right-angled Artin group with defining graph $\Gamma$ that is not empty. Let $a_n$ be the number of elements in $G$ of word-length $n$ with respect to $\Gamma$, and let $b_n$ be the number of length $n$ geodesics issuing from the origin in the Cayley graph of $G$ with respect to $\Gamma$. Then, $b_n \sim C\,\, \delta^n \,\, a_n$, as $n\rightarrow \infty$, where $\delta = \delta(G) > 1$ is a ratio of two Perron numbers, and $C = C(G) >0$ is a constant. In particular, this implies that $\beta(G) > \alpha(G)$.
\end{letteredthm}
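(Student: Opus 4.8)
\medskip

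The plan is to read off $\alpha(G)$ and $\beta(G)$ from the transition matrices of the two finite state automata already used to prove Theorems~A--D, and then to compare them via Perron--Frobenius theory. Let $\mathcal{A}_{\mathrm{g}}$ be the automaton recognizing the language $L_{\mathrm{g}}$ of all geodesic words of $G$ (the Brink--Howlett automaton \cite{BH} in the RACG case and its analogue \cite{LMW} in the RAAG case), and let $\mathcal{A}_{\mathrm{s}}$ be the automaton recognizing the sublanguage $L_{\mathrm{s}}\subseteq L_{\mathrm{g}}$ of ShortLex normal forms; thus $a_n=\#(L_{\mathrm{s}}\cap\Sigma^n)$ and $b_n=\#(L_{\mathrm{g}}\cap\Sigma^n)$. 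After trimming each automaton to the states reachable from the start state and co-reachable to an accept state, and restricting to the dominant nontrivial strongly connected component, the sequences $a_n$ and $b_n$ are controlled by nonnegative integer matrices whose spectral radii are $\alpha(G)$ and $\beta(G)$, already known by Theorems~A--D to be Perron numbers (or equal to $1$). One may moreover arrange these two automata on a common state set, so that $\mathcal{A}_{\mathrm{s}}$ is obtained from $\mathcal{A}_{\mathrm{g}}$ by deleting exactly the transitions that would make a geodesic fail to be ShortLex-minimal.

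I would first verify that the dominant component of each automaton may be taken primitive. Irreducibility is automatic on a single strongly connected component; aperiodicity follows because an infinite RACG, resp.\ RAAG, meeting the hypotheses has geodesics of every sufficiently large word-length, which rules out a nontrivial period. Primitivity then gives, via the Perron--Frobenius theorem, clean asymptotics $a_n\sim A\,\alpha^n$ and $b_n\sim B\,\beta^n$ with $A,B>0$; consequently $\delta:=\beta/\alpha$ is a ratio of two Perron numbers and $C:=B/A>0$, and the whole statement reduces to the strict inequality $\beta>\alpha$.

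To prove $\beta>\alpha$ I would use that in a RACG or a RAAG two geodesics represent the same element if and only if one is carried to the other by a sequence of transpositions of adjacent commuting generators. The non-degeneracy hypothesis on $\Gamma$ (resp.\ on $\overline{\Gamma}$) forces the existence of a commuting pair of generators that can be exercised recurrently, i.e.\ a transition $e$ of $\mathcal{A}_{\mathrm{g}}$ that is deleted on passing to $\mathcal{A}_{\mathrm{s}}$ yet lies on a cycle through the dominant component of $\mathcal{A}_{\mathrm{g}}$. Since removing a transition lying on a cycle of an irreducible component strictly decreases its Perron eigenvalue, while deleting the remaining redundant transitions can only decrease it further, one obtains $\alpha=\rho(M_{\mathrm{s}})\le\rho(M_{\mathrm{g}}\setminus e)<\rho(M_{\mathrm{g}})=\beta$, hence $\delta=\beta/\alpha>1$. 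Combining this with the asymptotics above yields $b_n\sim C\,\delta^n a_n$.

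The step I expect to be the main obstacle is precisely this last one: showing, uniformly over all defining graphs allowed by the theorem, that the geodesics which are not ShortLex normal forms are not confined to a transient part of $\mathcal{A}_{\mathrm{g}}$ but recur along its dominant component at exponential rate --- equivalently, that at least one ``redundant'' transition sits on a cycle of the dominant component rather than in a peripheral part of the automaton. This requires a fairly explicit understanding of which states of the Brink--Howlett, resp.\ Loeffler--Meier--Worthington, automaton are recurrent and of how the commutation transpositions act on them, and it is exactly here that excluding the degenerate defining graphs of the statement is used in an essential way.
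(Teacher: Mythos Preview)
Your approach is essentially the paper's: put the shortlex and geodesic automata on a common state set, observe that the transfer matrices satisfy $M\le N$ entrywise, invoke primitivity to get $a_n\sim C_1\alpha^n$ and $b_n\sim C_2\beta^n$, and deduce $\alpha<\beta$ by Perron--Frobenius comparison. Two points deserve correction.

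First, the step you flag as the main obstacle is not one. You worry that the deleted transitions might lie only in a transient part of $\mathcal{A}_{\mathrm{g}}$, but recall what was already proved for Theorems~A and~B: when $\overline{\Gamma}$ is connected with at least three vertices, the \emph{shortlex} automaton $\mathcal{A}\setminus\{\emptyset\}$ is itself strongly connected (Lemmas~\ref{singletons-connected}--\ref{level}), hence so is $\mathcal{B}\setminus\{\emptyset\}$, on the \emph{same} state set. Thus $M$ and $N$ are both irreducible and there are no peripheral parts at all; any extra edge of $N$ joins two states of the unique component. The paper then simply quotes the standard comparison (if $M\le N$, $M\neq N$, both irreducible, then $\rho(M)<\rho(N)$, \cite[Corollary~A.9]{B}) rather than isolating a single edge $e$ as you do. Checking $M\neq N$ amounts to checking that $\Gamma$ has at least one edge, i.e.\ that $G$ is not a free product of copies of $\mathbb{Z}_2$; this is exactly where the hypothesis that $\overline{\Gamma}$ is not complete enters.

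Second, there is a genuine gap in your outline: you implicitly assume primitivity of the full automaton, but this fails when $\overline{\Gamma}$ is disconnected (the singleton states then split into several classes with no transitions between them). The paper handles this case first and separately, via the direct-product formulas $\alpha(G)=\max_i\alpha(G_i)$ and $\beta(G)=\sum_i\beta(G_i)$: if at least two components of $\overline{\Gamma}$ have more than one vertex the inequality $\alpha<\beta$ is immediate, and otherwise one discards the isolated vertices of $\overline{\Gamma}$ and reduces to the connected case above. The RAAG case is then deduced from the RACG case through the doubling $G\mapsto G^{\pm}$ already used for Theorems~C and~D, rather than by analysing a separate RAAG automaton.
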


As evidenced by our examples in the sequel, geodesic growth rates may not be Perron numbers outside the class of right-angled Coxeter groups. If we consider the automatic growth rate, c.f. \cite{GS}, which is notably associated with a non-standard generating set, then this quantity is not necessarily a Perron number already in the right-angled case.

We refer the reader to the monograph \cite{BB} for a comprehensive exposition of combinatorics of Coxeter groups and to \cite{LM} for more information on the general dynamical properties of finite state automata.

\section{Preliminaries}\label{section:preliminaries}

In this section we briefly recall all the necessary notions and facts that are used in the sequel.

A \emph{Perron number} is a real algebraic integer bigger than $1$ which is greater in its absolute value than any of its other Galois conjugates. Perron numbers constitute an important class of numbers that appear, in particular, in connection with dynamics, c.f. \cite{LM}.

\smallskip

Let $M$ be a square $n \times n$ ($n \geq 1$) matrix with real entries. Then $M$ is called \emph{positive} if $M_{ij} > 0$, for all $1 \leq i, j \leq n$, and \emph{non-negative} if $M_{ij} \geq 0$, for all $1 \leq i, j \leq n$.

A non-negative matrix $M$ is called \emph{reducible (or decomposable)} if there exists a permutation matrix $P$ such that $PMP^{-1}$ has an upper-triangular block form. Otherwise, $M$ is called \textit{irreducible (or indecomposable)}. It is well-known that if $M$ is the adjacency matrix of a directed graph $D$, then $M$ is irreducible if and only if $D$ is strongly connected (i.e. there is a directed path between any two distinct vertices of $D$).

The \emph{$i$-th period} ($1 \leq i \leq n$) of a non-negative matrix $M$ is the greatest common divisor of all natural numbers $d$ such that $(M^d)_{ii} > 0$. If $M$ is irreducible, then all periods of $M$ coincide and equal \emph{the period of} $M$. A non-negative matrix is called \emph{aperiodic} if it has period $1$.
A non-negative matrix that is irreducible and aperiodic is called \emph{primitive}.

The classical Perron-Frobenius theorem implies that the largest real eigenvalue of a square $n\times n$ ($n \geq 2$) non-negative primitive integral matrix is a Perron number, c.f. \cite[Theorem 4.5.11]{LM}.

\smallskip

In our case, the matrix $M$ represents the transfer matrix of a finite-state automaton $\mathcal{A}$ (or its part), which can be viewed as a directed graph. Let $a_l = |\{$ words of length $l$ accepted by $ \mathcal{A} \}|$. Then the \textit{exponential growth rate} of the regular language $L = L(\mathcal{A})$ accepted by $\mathcal{A}$ is defined as $\gamma(L) = \limsup_{l\to \infty} \sqrt[l]{a_l}$. The spectral radius of $M$ equals exactly $\gamma(L)$ provided that the latter is bigger than $1$, c.f. \cite[Proposition 4.2.1]{LM}.

If $G$ is a group with a generating set $S$, let $S^{-1}$ be the set of inverses of the elements in $S$. The word-length of an element in $g \in G$ is the minimum length of a word over the alphabet $S \cup S^{-1}$ needed to write $g$ as a product. Then we define the \textit{spherical exponential growth rate} of $G$ with respect to $S$ as $\alpha(G, S) = \limsup_{l\to \infty}\sqrt[l]{a_l}$, for $a_l$ being the number of elements in $G$ of word-length $l$.

The \textit{geodesic exponential growth rate} $\beta(G)$ of the group $G$ with respect to a generating set $S$ is defined as $\beta(G, S) = \limsup_{l\to \infty}\sqrt[l]{b_l}$, for $b_l$ being the number of geodesic paths in the Cayley graph of $G$ with respect to $S$ starting at the identity and having length $l$. Here a geodesic path is a path joining two given vertices and having minimal number of edges, hence it is simple (i.e. without backtracking or self-intersections) .

If $\mathrm{ShortLex}$ is the shortlex language for $G$ and $\mathrm{Geo}$ is the geodesic language for $G$, in each case with respect to $S$, then $\alpha(G, S) = \gamma(\mathrm{ShortLex})$ and $\beta(G, S) = \gamma(\mathrm{Geo})$.

The right-angled Coxeter group (or RACG, for short) $G$ defined by a simple graph $\Gamma = (V, E)$ with vertices $V = V\Gamma$ and edges $E = E\Gamma$, is the group with standard presentation
\begin{equation*}
G = \langle  v \in V\Gamma \mid v^2=1,\, \mbox{ for all } v \in V\Gamma,\quad [u, v]=1, \mbox{ if } (u, v) \in E\Gamma \rangle,
\end{equation*}
while the right-angled Artin group (or RAAG) $G$  defined by $\Gamma$ has standard presentation
\begin{equation*}
G = \langle  v \in V\Gamma \mid [u, v]=1, \mbox{ if } (u, v) \in E\Gamma \rangle.
\end{equation*}

It is known that the $\mathrm{ShortLex}$ and $\mathrm{Geo}$ languages are regular for RACGs and RAAGs with their standard symmetric generating sets, c.f. \cite{BH, LMW}. In the sequel, for a RACG or RAAG $G$ we shall write simply $\alpha(G)$, resp. $\beta(G)$, for the spherical, resp. geodesic,  growth rate of $G$ with respect to its standard symmetric generating set.

As the complement $\overline{\Gamma}$ of the defining graph $\Gamma$ splits into connected  components, the cor\-res\-pon\-ding RACG or RAAG splits into a direct product of the respective irreducible RACGs or RAAGs. If $\overline{\Gamma}$ has a connected component with three or more vertices, then the growth rate (spherical or geodesic) of the associated RACG is strictly greater than $1$. An analogous statement holds for a RAAG defined by a graph $\Gamma$ such that $\overline{\Gamma}$ has a connected component with two or more vertices. Thus, apart from easily classifiable exceptions, the growth rates (spherical and geodesic) of RACGs and RAAGs are strictly greater than $1$.

We would like to stress the fact that the geodesic growth rate of a Coxeter group (not a RACG) does not have to be a Perron number (even if it is greater than $1$), as the example of the affine reflection group $\widetilde{A}_2$ shows (its spherical growth rate is, however, equal to $1$). The automaton $\mathcal{A}$ recognising the geodesic language $\mathrm{Geo}(\widetilde{A}_2)$ can be found in the book by Bj\"orner and Brenti \cite{BB} on page~118 (Figure~4.9), and is depicted in Figure~\ref{fig:automatonA2tilde} below for reader's convenience.

\begin{figure}[ht]
\centering
\includegraphics[width=13 cm]{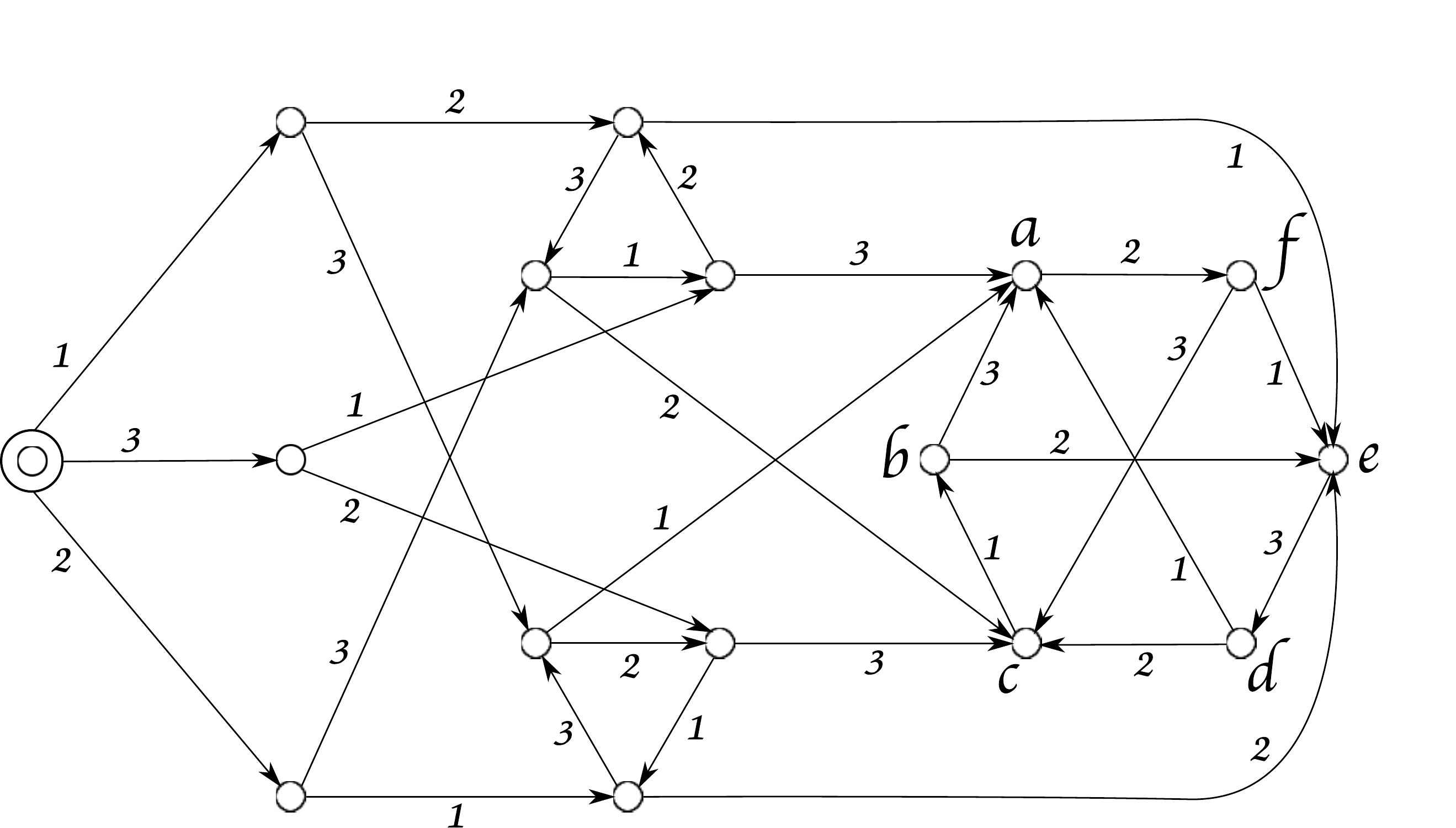}
\caption{\footnotesize The geodesic automaton for $\widetilde{A}_2 = \langle v_1, v_2, v_3 \mid v^2_i=1, i\in \{1,2,3\};\,\,\, (v_i v_j)^3=1, (i,j\in \{1,2,3\},\, i\ne j) \rangle$. The generators labelling its arrows are indicated by their indices. The start state is marked by a double circle. The fail state and the corresponding arrows are omitted. The attracting component has vertices $\{a, b, c, d, e, f\}$.}\label{fig:automatonA2tilde}
\end{figure}

Observe that the automaton $\mathcal{A}$ has a single attracting component spanned by the vertices labelled $\{a, b, c, d, e, f\}$, while the period of $a$ equals $\mathrm{gcd}(4,6) = 2$. By \cite[Exercise 4.5.13]{LM}, this is enough to conclude that the growth rate of $\mathcal{A}$ is not a Perron number. Thus, neither is the geodesic growth rate of $\widetilde{A}_2$. A direct computation shows that it equals $\sqrt{2}$, whose only other Galois conjugate is its negative.

We would like to note that we do not know any example of an infinite Coxeter group such that its spherical growth rate with respect to the standard generating set is not a Perron number, neither equal to~$1$. However, one can find a gainsaying example even for a RACG, when one considers a non-standard generating set. It is generally not known, whether the growth series of Coxeter groups are rational for all generating sets, but for a RACG $G$ a natural generating set with this propery was introduced in the paper \cite{GS}. This generating set, called the \textit{automatic} generating set, consists of all words $b_1 b_2 \ldots b_k$ where $\{b_1,b_2,\ldots,b_k\}$ is a clique in the defining graph of the group. The automaton described in \cite[Remark~5]{GS} accepts the shortlex language of normal forms with respect to the aforementioned alphabet, so that the corresponding growth series is rational and the spherical growth rate is an algebraic number. Let us then consider the group $\mathbb{Z}_2*(\mathbb{Z}_2 \times \mathbb{Z}_2)$ that is defined by the graph on the set of vertices $S=\{a, b, c\}$ having a single edge joining $b$ and $c$. The spherical growth rate with respect to the standard set $S$ can be easily computed and it equals the golden ratio $(1+\sqrt5)/2$, which is a Pisot number (and thus a Perron number). However, the automatic generating set $\{a,b,c,d\}$ from \cite{GS} provides normal forms where the letters $a$ alter with the other three letters $b$, $c$, and $d = bc$, so that the corresponding growth rate equals $\sqrt3$, which is not a Perron number.

\section{Finite automata for shortlex and geodesic words}\label{section:automata}

We begin by introducing more of the general set-up and describing the structure of finite automata for the shortlex and geodesic languages associated with a RACG or RAAG, say $G$. Then we outline the ideas of subsequent proofs. We start by the case of RACGs and then continue to that of RAAGs, since the latter can be deduced from the former.

First of all, certain assumptions can be made about the defining graph $\Gamma = \langle V, E \rangle$ of the RACG $G$ according to our observations in the previous section. We suppose that the complement $\overline{\Gamma}$ is connected and has three or more vertices. Otherwise, either $G \cong D_{\infty}$ or $G$ splits as a direct product of two RACGs $G_1$ and $G_2$, and for the growth rates we have $\alpha(G) = \max\{ \alpha(G_1), \alpha(G_2) \}$ \cite[\S VI.C.59 ]{H} and $\beta(G) = \beta(G_1) + \beta(G_2)$ \cite[Theorem 2.2]{Br}.

If any of $G_i$'s is a finite group, then its defining graph is a complete graph and its spherical and geodesic growth rates are equal to $0$. Otherwise, both of its growth rates are at least $1$. Thus, we either take a maximum of two numbers, each of which is, by assumption, either $0$, or $1$, or a Perron number, or a sum of such two numbers. Thus, the resulting value is also either $0$, or $1$, or a Perron number \cite{Lind}. In fact, $0$ happens as a growth rate for finite RACGs only.

For a RAAG $G$ with defining graph $\Gamma$, we assume that the complement $\overline{\Gamma}$ is connected and has two or more vertices. Otherwise, either $G \cong \mathbb{Z}$, or $G$ splits as a direct product of two RAAGs $G_1$ and $G_2$, and the previous argument for RACGs applies verbatim.  Each $G_i$ has spherical and geodesic growth rates at least $1$.

Now we describe two automata, which are the main objects of our further consideration. The first automaton, called $\mathcal{A}$, accepts the shortlex language of words for the RACG $G$ with respect to its standard generating set, and the second one, called $\mathcal{B}$, accepts the geodesic words for $G$ (with respect to the standard generating set).

We start by describing the automaton $\mathcal{B}$, which is introduced in \cite{LMW}, since it has a simpler structure. For a simple graph $\Gamma$, and a vertex $v\in V\Gamma$, let the star of $v$ be the set $\mathrm{st}(v) = \{ u \in V\Gamma\, |\, u \mbox{ is adjacent to } v  \mbox{ in } \Gamma \}$.

Then, $\mathcal{B}$ has the following set of states $\mathcal{S}$ and transition function $\delta$:
\begin{itemize}
\item[a)] $\mathcal{S} = \{ s \subseteq V\Gamma \mid s \mbox{ spans a clique in } \Gamma  \} \cup \{\emptyset\} \cup \star$,
\item[b)] the start state is $\{\emptyset\}$, and the fail state is $\star$ only, while all other states are accept states,
\item[c)] for each $s \in \mathcal{S}$ and $v \in V\Gamma$ we have $\delta(s, v) = \{v\} \cup (\mathrm{st}(v) \cap s)$, while $v \notin s$, and $\star$ otherwise.
\end{itemize}

Next, we order the vertices of $\Gamma$ with respect to some total order $\{ v_{i_1} < v_{i_2} < \dots < v_{i_n} \}$ and consider the shortlex automaton $\mathcal{A}$ for $G$ which is obtained from $\mathcal{B}$ simply by deleting all the transitions which violate the shortlex order.\footnote{The automaton under consideration is actually accepting the \emph{reverse shortlex} language, where the significance of letters reduces from right to left, with ``smaller'' letters considered more significant. However, this language has the same growth function as the standard shortlex language, and thus there is no difference for the purposes of our proof.}

Thus, we modify $\delta$ as follows:
\begin{itemize}
\item[a)] $\delta(s, v) = \star$, if $v \in s$\, or\, $v > \mathrm{min}(\mathrm{st}(v) \cap s)$, when $\mathrm{st}(v) \cap s \neq \emptyset$,
\item[b)] $\delta(s, v) = \{v\} \cup (\mathrm{st}(v) \cap s)$, otherwise.
\end{itemize}

For the sake of convenience, we shall omit the fail state $\star$ and the corresponding arrows in all our automata, similar to Figure~\ref{fig:automatonA2tilde}.

It is worth noting that the automata $\mathcal A$ and $\mathcal B$ can be built using two different approaches: via the combinatorics on words, where a state describes the set of possible last letters in the normal form of a given word, c.f. \cite{LMW}, or using the geometry of short roots of a given Coxeter group, c.f. \cite{BH} (note that the latter is much more powerful since it works for all Coxeter groups).

In what follows, we shall prove that the transfer matrix $M = M(\mathcal{A} \setminus \{\emptyset\})$ is primitive. We need to consider such a pruned automaton since the start state $\{\emptyset\}$ has no incoming arrows, and thus $\mathcal{A}$ itself is not strongly connected. However, we need only the rest of $\mathcal{A}$  in order to count non-trivial words, and may instead suppose that we have several start states, while the set of accepted words will be partitioned by their first letters.

Then we show that $\mathcal{A} \setminus \{\emptyset\}$ is strongly connected by finding a subset of the so-called \textit{singleton states}, and first showing that the latter is strongly connected (Lemma \ref{singletons-connected}). Then we prove that for any other state there is always a directed path in $\mathcal{A} \setminus \{\emptyset\}$ leading to a singleton state (Lemma \ref{level}) and vice versa (an easy observation). This is equivalent to saying that $M$ is irreducible.

Furthermore, at least one of the singleton states belongs simultaneously to a $2$- and a $3$-cycle of directed edges in $\mathcal{A}$ (Lemma \ref{period-one}). This will imply that $M$ is aperiodic. Then the Perron-Frobenius theorem, as stated in \cite[Theorem 4.5.11]{LM}, applied to $M$ guarantees that $\alpha(G)$ is a Perron number (Theorem A). By applying analogous reasoning to the automaton $\mathcal{B}$, we obtain that $\beta(G)$ is also a Perron number (Theorem B).

In order to proceed to RAAGs, we apply \cite[Lemma 2]{DS} stating that  for a RAAG $G$, there exists an associated RACG $G^\pm$ whose spherical and geodesic growth rates coincide with those of $G$. Thus, the result for RAAGs follows (Theorems C and D).

Finally, by using the notion of matrix domination \cite[Definition A.7]{B}, we show that the Perron-Frobenius eigenvalue of the transfer matrix of $\mathcal{A}$ strictly dominates that of $\mathcal{B}$ under certain simple conditions on the defining graph $\Gamma$, from which the required inequality for the growth rates immediately follows (Theorem E).

\section{Proof of Theorem A}\label{section:proof-A}

Let $G$ be an infinite right-angled Coxeter group with defining graph $\Gamma$. We show that \emph{the spherical exponential growth rate $\alpha(G)$ of $G$ with respect to its standard set of generators determined by $\Gamma$ is either $1$ or a Perron number.}

In the sequel, we assume that $\overline{\Gamma}$ is connected, otherwise we proceed to its connected components, as discussed in the previous section. Also, let $\Gamma$ have at least $3$ vertices, otherwise $G \cong D_\infty$ and the proof is finished.

The following definition describes a useful class of states of the shortlex automaton $\mathcal{A}$ introduced in the previous section.
\begin{defn}
Let $s \in \mathcal{S}$ be a state of the automaton $\mathcal{A}$. We call $s$ a \textit{singleton} if $s = \{v\}$ for a vertex $v \in V\Gamma$.
\end{defn}

Next, we show a crucial, albeit almost evident, property of singleton states.
\begin{lemma}\label{singletons-connected}
The set of singleton states of $\mathcal{A}$ is strongly connected.
\end{lemma}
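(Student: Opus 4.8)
The claim is that the singleton states $\{v\}$, $v \in V\Gamma$, form a strongly connected subgraph of $\mathcal{A}$. I would first unpack what the transitions between singletons look like. Starting from the singleton $\{v\}$, applying the letter $w$ yields, according to the modified $\delta$, the state $\{w\} \cup (\mathrm{st}(w) \cap \{v\})$, provided $w \notin \{v\}$ and the shortlex condition $w \le \min(\mathrm{st}(w) \cap \{v\})$ holds (vacuously when $\mathrm{st}(w) \cap \{v\} = \emptyset$). So if $v$ and $w$ are \emph{non-adjacent} in $\Gamma$ and $v \ne w$, then $\mathrm{st}(w) \cap \{v\} = \emptyset$ and $\delta(\{v\}, w) = \{w\}$ — a direct edge $\{v\} \to \{w\}$ with no shortlex obstruction at all. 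Thus the subgraph on singletons contains the directed graph on $V\Gamma$ where we put an arc $v \to w$ whenever $v, w$ are non-adjacent in $\Gamma$, i.e. whenever $\{v,w\}$ is an edge of the complement $\overline{\Gamma}$ (and this arc goes both ways). Consequently the singleton subgraph of $\mathcal{A}$ contains, as a subgraph, the symmetric orientation of $\overline{\Gamma}$ restricted to its edge set.

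Now I would invoke the standing hypothesis (stated just before the lemma in the excerpt, and in Section~\ref{section:automata}) that $\overline{\Gamma}$ is connected with at least $3$ vertices. Connectedness of $\overline{\Gamma}$ means that for any two vertices $v, w$ there is a path $v = u_0, u_1, \dots, u_k = w$ in $\overline{\Gamma}$, i.e. consecutive $u_i, u_{i+1}$ are non-adjacent in $\Gamma$. By the previous paragraph each step $u_i \to u_{i+1}$ is a legal transition between the corresponding singletons, so concatenating them gives a directed path $\{v\} \to \{w\}$ inside $\mathcal{A}$. Since this works for every ordered pair, the set of singleton states is strongly connected, which is exactly the claim. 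One small point to double-check: singletons $\{v\}$ are genuine states of $\mathcal{A}$ (every single vertex trivially spans a clique) and they are all reachable/non-fail, so no degeneracy arises.

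There is essentially no serious obstacle here — the lemma is, as the authors say, ``almost evident.'' The only thing requiring a moment's care is verifying that the shortlex pruning does not kill the edges we need: in the transitions we use, $\mathrm{st}(w) \cap \{v\}$ is empty precisely because $v,w$ are non-adjacent, so clause (a) of the modified $\delta$ never triggers, and the edge survives in $\mathcal{A}$, not merely in $\mathcal{B}$. One could alternatively phrase the whole argument inside $\mathcal{B}$ and then note the edges persist in $\mathcal{A}$; I would present it directly in $\mathcal{A}$. Finally, I would remark that the hypotheses ``$\overline\Gamma$ connected'' and ``$|V\Gamma| \ge 3$'' are exactly what make $\overline\Gamma$ have at least one edge and be connected, so the argument is tight with respect to the assumptions carried over from Section~\ref{section:automata}.
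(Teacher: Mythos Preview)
Your argument is correct and is exactly the paper's approach: it observes that whenever $u$ and $v$ are adjacent in $\overline{\Gamma}$ one has $\delta(\{u\},v)=\{v\}$ and $\delta(\{v\},u)=\{u\}$, and then invokes connectivity of $\overline{\Gamma}$. Your additional remark that the shortlex pruning does not affect these transitions (since $\mathrm{st}(w)\cap\{v\}=\emptyset$) is a useful clarification that the paper leaves implicit.
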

\begin{proof}
If two vertices $u$ and $v$ are connected in $\overline{\Gamma}$, then $\delta(\{u\}, v) = \{v\}$ and $\delta(\{v\}, u) = \{u\}$. By connectivity of $\overline{\Gamma}$, the claim follows.
\end{proof}

With the above lemma in hand, one can prove that the whole $\mathcal{A}\setminus \{\emptyset\}$ is strongly connected. To this end, let us partition the states of $\mathcal{A}$ by cardinality: $\mathcal{S} = \bigsqcup^{m}_{k = 0} \bigcup_{|s| = k} s$, and say that a state $s$ belongs to level $k$ if $|s| = k$, $(0\leq k \leq m)$, where $m$ is the maximal clique size in $\Gamma$.

\begin{figure}[ht]
\centering
\includegraphics[width=9 cm]{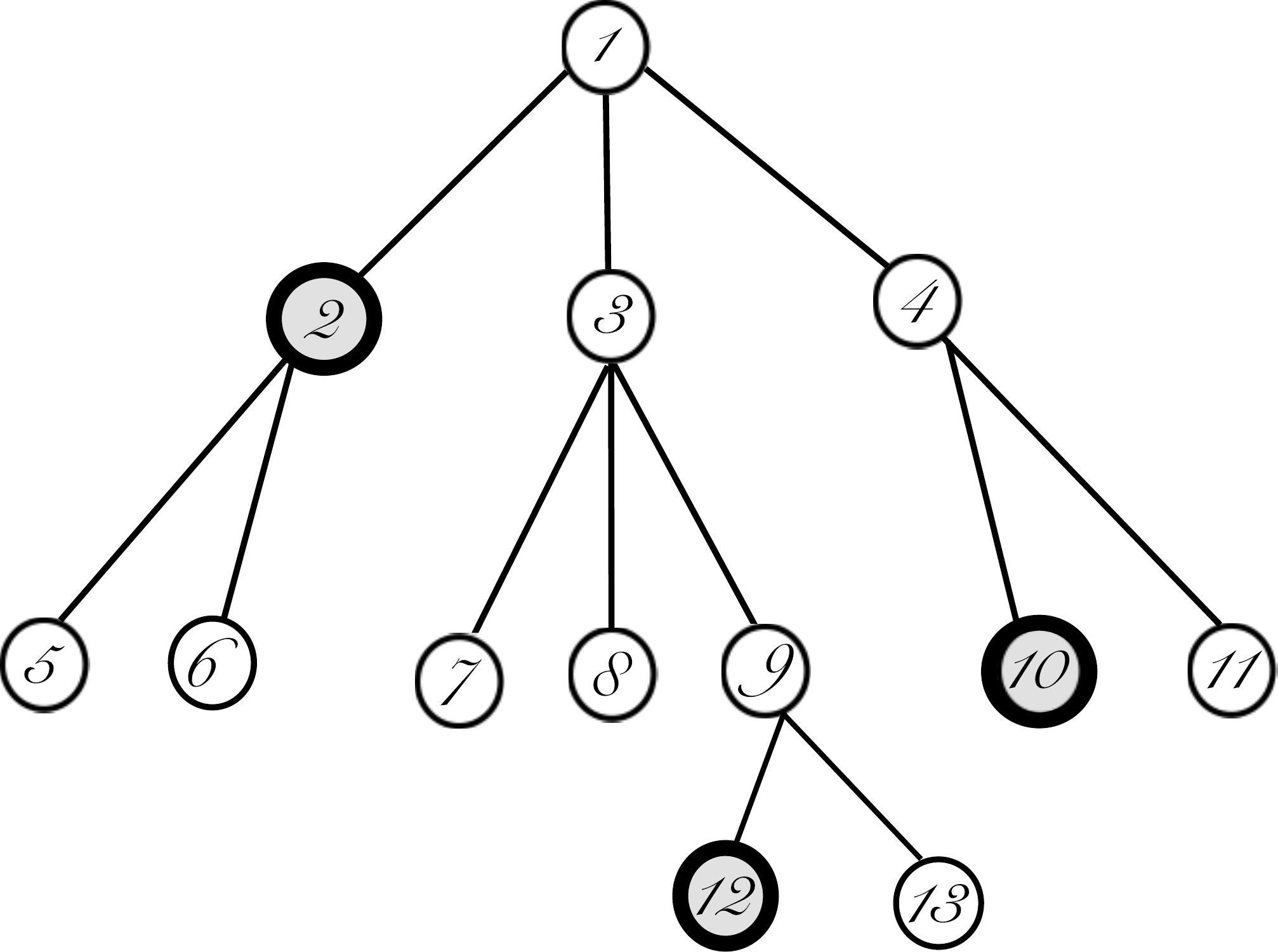}
\caption{\footnotesize A state $s = \{ 2, 10, 12 \}$ of level $3$ is represented by highlighted vertices in the spanning tree $T$ for $\overline{\Gamma}$. Here, following the proof notation, $u = 2$, $v = 10$, and $w = 1$.}\label{fig:tree}
\end{figure}

\begin{lemma}\label{level}
Any state of $\mathcal{A}$ of level $k > 1$ is connected by a directed path to a state of strictly smaller level $l < k$.
\end{lemma}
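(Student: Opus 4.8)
The plan is to take a state $s$ of level $k > 1$ and exhibit a letter $v \in V\Gamma$ whose transition out of $s$ is \emph{not} the fail state $\star$ and lands in a state of level strictly less than $k$. Recall the shortlex transition rules: from $s$, the letter $v$ is admissible precisely when $v \notin s$ and, if $\mathrm{st}(v) \cap s \neq \emptyset$, then $v < \min(\mathrm{st}(v) \cap s)$; in that case $\delta(s,v) = \{v\} \cup (\mathrm{st}(v) \cap s)$. So the new level is $1 + |\mathrm{st}(v) \cap s|$, which drops below $k = |s|$ as soon as $|\mathrm{st}(v) \cap s| \leq k - 2$, i.e.\ as soon as $v$ fails to be adjacent (in $\Gamma$) to at least two of the vertices of $s$. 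Since $s$ spans a clique in $\Gamma$, the complement $\overline{\Gamma}$ restricted to $s$ is an empty graph on $k \geq 2$ vertices; picking any two distinct vertices $u, v_0 \in s$, they are non-adjacent in $\Gamma$, so $v_0 \notin \mathrm{st}(u)$. The difficulty is that we need a single vertex $w \notin s$ that is simultaneously non-adjacent in $\Gamma$ to two chosen members of $s$ \emph{and} satisfies the shortlex inequality $w < \min(\mathrm{st}(w) \cap s)$; a member of $s$ itself is disallowed since the rule requires $v \notin s$.

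The key idea, as the figure caption suggests, is to use a spanning tree $T$ of $\overline{\Gamma}$ and choose $w$ cleverly. First I would pick $u \in s$ to be the vertex of $s$ that is smallest in the total order (or, alternatively, the one forced by the tree structure), and let $v \in s$ be a second vertex of $s$. Walk in the tree $T$ from $u$ towards $v$: the neighbour $w$ of $u$ on this $T$-path is adjacent to $u$ in $\overline{\Gamma}$, hence $w \notin \mathrm{st}(u)$ in $\Gamma$, so $u \notin \mathrm{st}(w) \cap s$. If additionally $w \notin \mathrm{st}(v)$ in $\Gamma$ — which happens, for instance, when $w \neq v$, because $w$ is a $T$-neighbour of $u$ and the members of $s$ other than $u$ are at $T$-distance $\geq 1$ but the relevant non-edges can be arranged — then $\mathrm{st}(w) \cap s$ misses both $u$ and $v$, so $|\mathrm{st}(w)\cap s| \leq k-2$ and the target level is $\leq k-1$. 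The remaining case $w = v$ (i.e.\ $u$ and $v$ are $T$-adjacent) must be handled by instead taking $w$ to be a $T$-neighbour of $u$ \emph{not} on the path to $v$, or by swapping the roles of $u$ and $v$; since $T$ is a tree on $\geq 3$ vertices and $\overline\Gamma$ is connected with $\geq 3$ vertices, such a branch vertex or alternative neighbour exists.

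The genuinely delicate point — the main obstacle — is the shortlex constraint $w \notin s$ together with $w < \min(\mathrm{st}(w)\cap s)$. The containment $w \notin s$ is not automatic: the chosen $T$-neighbour of $u$ could itself lie in $s$. To force $w \notin s$, I would choose $u$ to be an \emph{extremal} vertex of $s$ with respect to $T$ — e.g.\ the vertex $u \in s$ whose subtree (after rooting $T$ appropriately) is minimal, or a vertex of $s$ that is a leaf of the subtree of $T$ spanned by $s$ — and take $w$ to be the $T$-neighbour of $u$ hanging off in the direction away from the rest of $s$; such a $w$ lies outside $s$ because $s$ spans a clique in $\Gamma$ and therefore an independent set in $T \subseteq \overline{\Gamma}$, so no two elements of $s$ are $T$-adjacent. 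For the order condition, note $\mathrm{st}(w) \cap s \subseteq s \setminus \{u\}$ once $w \notin \mathrm{st}(u)$; if we have chosen $u = \min s$ we would then need $w < \min(\mathrm{st}(w)\cap s)$, which is not guaranteed, so instead I would choose $u$ to be the \emph{largest} element of $s$, take $w$ its $T$-neighbour outside $s$ and away from $s$; then it remains to check that either $\mathrm{st}(w)\cap s = \emptyset$ (in which case the target state $\{w\}$ has level $1 < k$ and we are done immediately, the best case) or that we can still adjust $u$ and $w$ so that the shortlex inequality holds. In the write-up I expect this case analysis — extremal choice of the pair $(u,v)$ in the tree plus verifying $w\notin s$ and the shortlex inequality — to be the bulk of the argument, while the level-drop count $1 + |\mathrm{st}(w)\cap s| \leq k-1$ is immediate once the combinatorial setup is in place. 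Together with the reverse direction (every singleton is reachable, which is easy: from $\{v\}$ apply any admissible letter and eventually any clique can be built up, or simply note singletons reach singletons by Lemma~\ref{singletons-connected} and nothing is needed), this shows $\mathcal{A}\setminus\{\emptyset\}$ is strongly connected, i.e.\ $M$ is irreducible.
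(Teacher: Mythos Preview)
Your plan has a genuine gap: you are trying to produce a \emph{single} letter $w$ whose transition from $s$ lands in a strictly lower level, but that is not always possible. There exist states $s$ (cliques in $\Gamma$) for which every admissible $w$ is $\Gamma$-adjacent to all but one vertex of $s$, so $1+|\mathrm{st}(w)\cap s|=|s|$ and no single step drops the level. The lemma only asks for a directed \emph{path}, and the paper exploits this by an induction that may take several transitions.

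The two ingredients you are missing are these. First, the total order on $V\Gamma$ is not arbitrary: one fixes a spanning tree $T$ of $\overline{\Gamma}$, roots it, and labels vertices so that every parent has a smaller label than each of its children. Second, with this labelling one takes $u=\min(s)$, $v=\min(s\setminus\{u\})$, and lets $w$ be the $T$-neighbour of $u$ on the unique $T$-path $p$ from $u$ to $v$ (note $|p|\ge 2$ since $u,v$ lie in a $\Gamma$-clique and hence are non-adjacent in $\overline{\Gamma}$, so your worry about the case ``$w=v$'' never arises). The labelling forces $w<v$; since $u\notin\mathrm{st}(w)$ one gets $\mathrm{st}(w)\cap s\subseteq s\setminus\{u\}$ and hence $w<v\le \min(\mathrm{st}(w)\cap s)$, so the shortlex constraint is automatic---no case analysis needed. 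If the resulting state $s'$ has the same level, then necessarily $s'=(s\setminus\{u\})\cup\{w\}$ with $\min(s')=w$, $\min(s'\setminus\{w\})=v$, and the $T$-path from $w$ to $v$ is strictly shorter than $p$; one finishes by induction on $|p|$, the base case $|p|=2$ giving an immediate level drop.

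Two small slips in your write-up: the implication ``$w\neq v\Rightarrow w\notin\mathrm{st}(v)$'' is false, and switching to $u=\max(s)$ does not help with the shortlex inequality (it makes it worse). The whole difficulty evaporates once the order is tied to the rooted tree as above.
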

\begin{proof}
Let us choose a spanning tree $T$ in $\overline{\Gamma}$ and suspend it by the root. We can assume that the order on the vertices of $\Gamma$ is defined by assigning a unique integer label in the set $\{1, \dots, n\}$ and then comparing the numbers in the usual way. We label the root $1$, and the lower levels of successor vertices of $T$ are labelled left-to-right in the increasing order. An example of such labelling is shown in Figure~\ref{fig:tree}.

Let $s \in \mathcal{S}$ be a state of $\mathcal{A}$ (represented by a clique in $\Gamma$) that is not a singleton. Let $u$, $v$ be such vertices in $s$ that $u = \min(s)$,  and $v = \min(s \setminus \{ u \})$. Then there exists a path $p$ in $T$ that connects $u$ and $v$. Necessarily, the length of $p$ is $|p| \geq 2$.

Note that for the vertex $w$ adjacent to $u$ in $p$ we have $w < v$ by construction of $T$ and its labelling. Since $u \notin \mathrm{st}(w)$, we have that $w < \min( \mathrm{st}(w) \cap s )$, and therefore $s' = \delta(s, w) = \{ w \} \cup (\mathrm{st}(w) \cap s) \neq \star$. Thus, we find a new state $s'$ which is not a fail state. If $l = |s'| < |s| = k$, then the proof is finished. Note, that the inequality $l < k$ always holds if $|p| = 2$, since in this case $s' = (s \setminus \{ u,v \})\cup \{ w \}$. Let us suppose that $l = k$ and $|p| > 2$. Then  $s' = (s \setminus \{ u \}) \cup \{ w \}$, and $\min(s') = w$, $\min(s'\setminus \{w\}) = v$, while the path $p'$ joining $w$ to $v$ in $T$ has length $|p'| < |p|$. Hence, we conclude the proof by induction on the length of this path.
\end{proof}

Let $s, s' \in \mathcal{S}$ be two states of $\mathcal{A}$ of the respective levels $l$ and $m$, with $l, m \geq 1$. Then we can apply Lemma~ \ref{level} repeatedly in order to move from $s$ to some singleton state $\{u\}$, while by construction of $\mathcal{A}$ there exist a singleton $\{v\}$ and a directed path from $\{v\}$ to $s'$. Due to  Lemma~\ref{singletons-connected} one can then move among the singletons from $\{u\}$ to $\{v\}$, and thus connect $s$ to $s'$ by a directed path in $\mathcal{A}\setminus \{ \emptyset \}$. Then $\mathcal{A}\setminus \{ \emptyset \}$ is strongly connected, and its transfer matrix $M$ is irreducible.

\begin{lemma}\label{period-one}
At least one singleton state of the automaton $\mathcal{A}$ belongs simultaneously to a $2$- and a $3$-cycle of directed edges in $\mathcal{A}$.
\end{lemma}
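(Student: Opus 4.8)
The plan is to exhibit, very explicitly, one singleton state of $\mathcal{A}$ that sits on both a directed $2$-cycle and a directed $3$-cycle. By the analysis of the transitions of $\mathcal{A}$, a $2$-cycle through a singleton $\{v\}$ amounts to finding a vertex $u$ with $u,v$ \emph{non-adjacent} in $\Gamma$ (so that $\delta(\{v\},u)=\{u\}$ and $\delta(\{u\},v)=\{v\}$, with no shortlex violation since $\mathrm{st}(u)\cap\{v\}=\emptyset$ and $\mathrm{st}(v)\cap\{u\}=\emptyset$). Such a $u$ exists by connectivity of $\overline{\Gamma}$ and the hypothesis $|V\Gamma|\geq 3$; in fact every vertex has a non-neighbour in $\Gamma$. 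So the only real content is the $3$-cycle.

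First I would look for a singleton $\{v\}$ together with two further vertices so that we can return to $\{v\}$ in exactly three steps, \emph{without ever being sent to the fail state}. The simplest candidate is a directed triangle $\{v\}\to\{u\}\to\{w\}\to\{v\}$ passing only through singletons; reading off the transition rule, this requires $v$ non-adjacent to $u$, $u$ non-adjacent to $w$, and $w$ non-adjacent to $v$ in $\Gamma$, i.e. a triangle (a $3$-cycle) in the complement graph $\overline{\Gamma}$, together with the shortlex conditions, which are automatically satisfied here because each intersection $\mathrm{st}(\cdot)\cap(\text{previous singleton})$ is empty. Thus \emph{if $\overline{\Gamma}$ contains a $3$-cycle}, we are done: pick any vertex of that triangle as our $v$; it lies on the $2$-cycle (with any non-neighbour) and on this $3$-cycle through singletons.

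The remaining case is when $\overline{\Gamma}$ is triangle-free. Since $\overline{\Gamma}$ is assumed connected with at least three vertices, it then contains an induced path $a - b - c$ (edges $ab, bc$ in $\overline{\Gamma}$, and $ac\notin E\overline{\Gamma}$, i.e. $a,c$ adjacent in $\Gamma$). Here I would use a \emph{level-two} state to close up a $3$-cycle anchored at a singleton: from $\{b\}$ apply the letter $a$ to reach $\delta(\{b\},a)=\{a\}$ (no violation, $a,b$ non-adjacent in $\Gamma$), then apply $c$; since $b,c$ are non-adjacent in $\Gamma$ while $a,c$ \emph{are} adjacent, we get $\delta(\{a\},c)=\{a,c\}$ provided the shortlex inequality holds, which we arrange by choosing the labelling so that $c$ is the minimum of $\mathrm{st}(c)\cap\{a\}=\{a\}$ — equivalently, order the three vertices so $c<a$. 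Finally apply $b$ to $\{a,c\}$: since $b$ is non-adjacent in $\Gamma$ to both $a$ and $c$, we get $\delta(\{a,c\},b)=\{b\}\cup(\mathrm{st}(b)\cap\{a,c\})=\{b\}$, again with no shortlex obstruction. This yields a directed $3$-cycle $\{b\}\to\{a\}\to\{a,c\}\to\{b\}$ through the singleton $\{b\}$, and $\{b\}$ also lies on a $2$-cycle with any of its $\Gamma$-non-neighbours (e.g. $a$), finishing this case.

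The main obstacle I anticipate is purely bookkeeping: making sure the shortlex-pruned transition function $\delta$ does not send us to $\star$ at any of the three steps, which forces a careful, case-dependent choice of the vertex order (the labelling of $T$ introduced in Lemma~\ref{level} is compatible with such a choice, or one simply notes the lemma statement is order-independent so any convenient order may be fixed for this argument). One should also double-check that the triangle-free alternative genuinely covers all cases not handled by ``$\overline{\Gamma}$ has a $3$-cycle'' — this is immediate from connectivity plus $|V\Gamma|\geq 3$, since a connected triangle-free graph on $\geq 3$ vertices has a vertex of degree $\geq 2$, giving the induced path $a-b-c$. No deeper structure theory is needed.
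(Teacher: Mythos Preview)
Your argument is correct and essentially identical to the paper's: both take a length-$2$ path in $\overline{\Gamma}$ and split on whether its endpoints are adjacent in $\Gamma$, producing the very same $2$- and $3$-cycles (your cycle $\{b\}\to\{a\}\to\{a,c\}\to\{b\}$ is the paper's $\{u\}\to\{u,w\}\to\{v\}\to\{u\}$ read from a different base vertex, with $(a,b,c)\leftrightarrow(u,v,w)$). The only cosmetic wrinkle is your justification for arranging $c<a$: rather than appealing to ``order-independence of the lemma'' (the automaton $\mathcal{A}$ genuinely depends on the fixed total order), simply observe that the two endpoints of the path may be interchanged, so one of them is smaller---exactly as the paper does when it stipulates $u>w$.
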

\begin{proof}
Since $\overline{\Gamma}$ is connected and has at least $3$ vertices, it contains a path subgraph with vertices $u$, $v$, and $w$, such that $uv$ and $vw$ are edges, and $u > w$ in the lexicographic order. Then we have the following cycles by applying $\delta$:
\begin{itemize}
\item[a)] $ \{u\} \rightarrow \delta(\{u\}, v) = \{v\} \rightarrow \delta(\{v\}, u) = \{u\}$,
\item[b)] $\{u\} \rightarrow \delta(\{u\}, w) = \{ u,w \} \rightarrow \delta(\{ u,w \}, v) = \{v\} \rightarrow \delta(\{v\}, u) = \{u\}$, \\ if $u$ and $w$ commute,
\item[c)] $\{u\} \rightarrow \delta(\{u\}, w) = \{ w \} \rightarrow \delta(\{w \}, v) = \{v\} \rightarrow \delta(\{v\}, u) = \{u\}$, \\ if the element $uw$ has infinite order.
\qedhere
\end{itemize}
\end{proof}

The above statement is equivalent to the transfer matrix $M = M(\mathcal{A}\setminus \{ \emptyset \})$ being aperiodic. Taking into account that $M$ is also irreducible, we obtain that  $M$ is primitive, and its Perron-Frobenius eigenvalue is thus a Perron number by \cite[Theorem 4.5.11]{LM}. In other words, the spherical growth rate $\alpha(G)$ is a Perron number.

\section{Proof of Theorem B}\label{section:proof-B}
Let $G$ be an infinite right-angled Coxeter group with defining graph $\Gamma$. We show that \emph{the geodesic exponential growth rate $\beta(G)$ of $G$ with respect to its standard set of generators determined by $\Gamma$ is either $1$ or a Perron number.}

As indicated in Section~\ref{section:automata}, we may suppose that $\overline{\Gamma}$ is connected and has at least three vertices. Let $\mathcal{B}$ be the geodesic automaton for $G$. Then $\mathcal{B}\setminus \{ \emptyset \}$ is strongly connected, since in order to obtain $\mathcal{B}$ from $\mathcal{A}$ we add directed edges to $\mathcal{A}$, and never remove one. Also, the argument of Lemma~\ref{period-one} applies verbatim to $\mathcal{B}$. Thus, the growth rate of the language accepted by $\mathcal{B}$ is a Perron number.

\section{Proof of Theorems C and D}\label{section:proof-C-D}
Let $G$ be a RAAG with defining graph $\Gamma$ and symmetric generating set $S = \{ v\, :\, v \in V\Gamma \} \cup \{ v^{-1}\, :\, v \in V\Gamma \}$. Let $\alpha(G)$ and $\beta(G)$ be, respectively, the spherical exponential growth rate and the geodesic exponential growth rate of $G$ with respect to $S$. Below we show that \emph{each of $\alpha(G)$ and $\beta(G)$ is either $1$, or a Perron number.}

According to our observation about the behaviour of growth rates of RAAGs with respect to direct products, we may assume that $\overline{\Gamma}$ is connected. By assuming that $\Gamma$ has two or more vertices we guarantee that the spherical and geodesic growth rates of $G$ are strictly greater than $1$. It is well-known, e.g. by \cite[Lemma 2]{DS}, that there exist a RACG $G^\pm$ with generating set $S^\pm$ such that its elements of length $k$ map injectively into the elements of length $k$ in the group $G$ with respect to the generating set $S$.

Indeed, let $G^\pm$ be the associated RACG with defining graph $\Gamma^\pm$, which is the double of $\Gamma$. That is, $\Gamma^\pm$ has a pair of vertices $v^+$ and $v^-$ for each vertex $v$ of $\Gamma$, and if $(u, v)\in E\Gamma$, then $(u^+, v^+)$, $(u^-, v^-)$, $(u^+, v^-)$, $(u^-, v^+)$ are edges of $\Gamma^\pm$. The generating set for $G^\pm$ is $S^\pm = V\Gamma^\pm$. Once we have a word $w = v^{r_1}_{i_1} v^{r_2}_{i_2} \ldots v^{r_s}_{i_s}$ in $G$, consider the corresponding word $\sigma(w) = \prod^{s}_{j=1} \sigma(v^{r_j}_{i_j})$, where each $\sigma(v^{r_j}_{i_j})$ has length $|r_j|$ and alternating form $v^+_{i_j} v^-_{i_j}v^+_{i_j} \ldots v^{\varepsilon}_{i_j}$, if $r_i>0$, or $v^-_{i_j} v^+_{i_j}v^-_{i_j}\ldots v^{-\varepsilon}_{i_j}$, if $r_i<0$, where $\varepsilon = \pm 1$, as appropriate. It is easy to check that the correspondence $\sigma$ between the set of words in $\mathrm{Geo}(G)$ and $\mathrm{Geo}(G^\pm)$ is one-to-one and length-preserving.

Define a lexicographic order on the symmetric generating set $S$ of $G$ in which generators with positive exponents always dominate, i.e. $u > v^{-1}$ for all $u, v \in V\Gamma$, and generators having same sign exponents are compared with respect to some total order such that $u < v$ if and only if $u^{-1} > v^{-1}$, for all $u \neq v \in V\Gamma$. Let the corresponding lexicographic order on the generating set $S^\pm$ of $G^\pm$ be defined by $u^+ > v^-$ for all the corresponding vertices of $\Gamma^\pm$, and $u^+ < v^+$, resp. $u^- > v^-$, whenever $u < v$ in the total order on the generating set $S$. Then $\sigma$ becomes compatible with the corresponding shortlex orders on $G$ and $G^\pm$.

That is, we have a one-to-one correspondence between the set of words of any given length in $\mathrm{Geo}(G)$ and $\mathrm{Geo}(G^\pm)$, as well as in $\mathrm{ShortLex}(G)$ and $\mathrm{ShortLex}(G^\pm)$.  This fact implies that $\alpha(G) = \alpha(G^\pm)$ and $\beta(G) = \beta(G^\pm)$, and thus the spherical growth rate $\alpha(G)$ of $G$ and its geodesic growth rate $\beta(G)$ are Perron numbers, by Theorem A and Theorem B for RACGs.

\section{Proof of Theorem E}\label{section:proof-E}

Let $G$ be an infinite right-angled Coxeter group with defining graph $\Gamma$ such that $\overline{\Gamma}$ is not a union of a complete graph and an empty graph, or let $G$ be a right-angled Artin group, with $\Gamma$ non-empty. Then we show that \textit{the geodesic growth rate $\beta(G)$ strictly dominates the spherical growth rate $\alpha(G)$.} In fact, this statement takes a more quantitative form, as can be seen below.

To this end, let $G$ be a RACG with defining graph $\Gamma$. If $\overline{\Gamma}$ has $k \geq 1$ connected components $\overline{\Gamma}_i$, $i=1,\dots, k$, then $G$ splits as a direct product $G_1 \times \ldots \times G_k$, where $G_i$ is a subgroup of $G$ determined by the subgraph $\Gamma_i$ spanned in $\Gamma$ by the vertices of $\overline{\Gamma_i}$. As mentioned in Section~\ref{section:automata}, the following equalities hold for the spherical and geodesic growth rates of a direct product:
\begin{equation*}
\alpha(G) = \max_{i=1,\dots, k}  \alpha(G_i),
\end{equation*}
while
\begin{equation*}
\beta(G) = \sum^k_{i=1} \beta(G_i),
\end{equation*}
Note that if $\overline{\Gamma_i}$ is an isolated vertex, then $\alpha(G_i) = \beta(G_i) = 0$, otherwise $\alpha(G_i)$, $\beta(G_i) \geq 1$.

\smallskip

Thus, if more than one connected component of $\overline{\Gamma}$ is not a vertex, then $\alpha(G) < \beta(G)$. The equality clearly takes place when $\overline{\Gamma}$ is a union of a complete graph and an empty graph. Now suppose that $\overline{\Gamma}$ is a union of several isolated vertices $v_i$, $i=1, \dots, k$, for $k\geq 0$, and a single connected graph $\overline{\Gamma_0}$ on two or more vertices. Since the non-zero growth rate in this case belongs to the latter, the initial group $G$ can be replaced by its subgroup determined by $\Gamma_0$. Thus we continue by setting $\Gamma := \Gamma_0$, and let $G$ be the corresponding RACG.

Let $M$ be the transfer matrix of the automaton $\mathcal{A}$ (the shortlex automaton for $G$), and $N$ be the transfer matrix of the automaton $\mathcal{B}$ (the geodesic automaton for $G$) constructed in Section~\ref{section:automata}. Since $\mathcal{A}$ is a subgraph of $\mathcal{B}$, if both are considered as labelled directed graphs, then $M$ is dominated by $N$ in the sense of \cite[Definition A.7]{B}. The spherical growth rate $\alpha = \alpha(G)$ and the geodesic growth rate $\beta = \beta(G)$ are the Perron-Frobenius eigenvalues (or, which is the same, spectral radii) of $M$ and $N$, respectively, c.f. \cite[Proposition 4.2.1]{LM}.

As we know from Sections \ref{section:proof-A} and \ref{section:proof-B}, both matrices $M$ and $N$ are irreducible. Moreover, $M$ and $N$ can coincide if and only if there are no commutation relations between the generators of $G$ (i.e. $G$ is a free product of two or more copies of $\mathbb{Z}_2$), which is not the case. Then, by \cite[Corollary A.9]{B}, we obtain the inequality $\alpha < \beta$.

Let $a_n$ be the number of elements in $G$ of word-length $n$ with respect to $\Gamma$, and let $b_n$ be the number of length $n$ geodesics issuing from the origin in the Cayley graph of $G$ with respect to $\Gamma$. Then, since the Perron-Frobenius eigenvalue is simple, the quantities $a_n$ and $b_n$ asymptotically satisfy $a_n \sim C_1\,\,\alpha^n$ and $b_n \sim C_2\,\, \beta^n$, as $n\rightarrow \infty$, for some constants $C_1, C_2 > 0$. Then the claim for RACGs follows.

\smallskip

The case of a RAAG $G$ with defining graph $\Gamma$ such that $\overline{\Gamma}$ is connected can be treated similarly provided the discussion of growth rates in Section~\ref{section:proof-C-D}, and the fact that the corresponding RACG $G^{\pm}$ has empty defining graph if and only if one starts with the empty graph $\Gamma$ for $G$. Otherwise, if $\overline{\Gamma}$ is disconnected, then the geodesic growth rate $\beta = \beta(G)$ is a sum of two or more numbers greater than or equal to $1$ (since the minimal possible spherical or geodesic exponential growth rate equals $1$ for a RAAG), while $\alpha = \alpha(G)$ is the maximum of those, which implies $\alpha < \beta$, as required.

\section*{Acknowledgements}
\noindent
{\small The authors gratefully acknowledge the support that they received from the Swiss National Science Foundation, project no.~PP00P2-170560 (for A.K.), and the Russian Foundation for Basic Research, projects no.~18-01-00822 and no.~18-51-05006 (for A.T.).

A.K. would like to thank Laura Ciobanu (Heriot--Watt University, UK) and A.T. would like to thank Fedor M. Malyshev (Steklov Mathematical Institute of RAS) for stimulating discussions. Both authors feel obliged to Denis~Osin (Vanderbilt University, USA) for his criticism that improved the earlier version of Theorem~E.

Also, A.T. would like to thank the University of Neuch\^{a}tel for hospitality during his visits in August 2018 and May 2019. Both authors express their gratitude to the anonymous referees for their careful reading of the manuscript and numerous comments that invaluably helped to improve the quality of exposition.}


\begin{thebibliography}{100}
\normalsize

\bibitem{Andreev-1}\textsc{E. M. Andreev}, {``On convex polyhedra in Loba\v{c}evski\u{\i} spaces''}, Math. USSR -- Sbornik \textbf{10} (1970), 413--440.

\bibitem{Andreev-2}\textsc{E. M. Andreev}, {``On convex polyhedra of finite volume in Loba\v{c}evski\u{\i} space''}, Math. USSR -- Sbornik \textbf{12} (1970), 255--259.

\bibitem{AC}\textsc{Y. Antol\`in, L. Ciobanu}, {``Geodesic growth in right-angled and even Coxeter groups''}, European J. Combin. \textbf{34} (2013), 859--874.

\bibitem{Beardon}\textsc{A. F. Beardon}, {``The geometry of discrete groups''}. Graduate Texts in Math. \textbf{91}. New-York: Springer--Verlag, 1983.

\bibitem{BB}\textsc{A. Bj\"{o}rner, F. Brenti}, {``Combinatorics of Coxeter groups''}. Graduate Texts in Math. \textbf{231}. Berlin, Heidelberg: Springer--Verlag, 2010.

\bibitem{B}\textsc{O. Bogopolski}, {``Introduction to group theory''}. EMS Textbooks in Mathematics. Zurich: EMS Publishing House, 2008.

\bibitem{BH}\textsc{B.~Brink, R.~Howlett}, {``A finiteness property and an automatic structure for Coxeter groups''}, Math.~Ann. \textbf{296} (1993), 179--190.

\bibitem{Br}\textsc{J.~Br\"onnimann}, {``Geodesic growth of groups''}, Ph.D. Thesis, Univerist\'e de Neuch\^atel, 2016.

\bibitem{CaWa}\textsc{J.W.~Cannon, Ph.~Wagreich}, {``Growth functions of surface groups''}, Math. Ann. \textbf{293} (1992), 239---257.

\bibitem{CK}\textsc{L. Ciobanu, A. Kolpakov}, {``Geodesic growth of right-angled Coxeter groups based on trees''}, J. Algebr. Comb. \textbf{44} (2016), 249--264.

\bibitem{DS}\textsc{C.~Droms, H.~Servatius}, {``The Cayley graphs of Coxeter and Artin groups''}, Proc. AMS \textbf{118} (1993), 693--698.

\bibitem{Floyd}\textsc{W. J.~Floyd}, {``Growth of planar Coxeter groups, P.V. numbers, and Salem numbers''}, Math.~Ann. \textbf{293} (1992), 475--483.

\bibitem{GTT}\textsc{I.~Gekhtman, S. J.~Taylor, G.~Tiozzo}, {``Counting problems in graph products and relatively hyperbolic groups''}, \texttt{arXiv:arXiv:1711.04177}

\bibitem{GS}\textsc{R. Glover, R. Scott}, {``Automatic growth series for right-angled Coxeter groups''}, Involve \textbf{2} (2009), 371--385.

\bibitem{H}\textsc{P. de la Harpe}, {``Topics in geometric group theory''}. Chicago Lectures in Mathematics, University of Chicago, 2000.

\bibitem{HM}\textsc{S. Hermiller, J. Meier}, {``Algorithms and geometry for graph products of groups''}, J. Algebra \textbf{171} (1995), 230--257.

\bibitem{Kellerhals}\textsc{R. Kellerhals}, {``On Schl\"{a}fli's reduction formula''}, Math. Z. \textbf{206} (1991), 193--210.

\bibitem{KePe}\textsc{R.~Kellerhals, G.~Perren}, {``On the growth of cocompact hyperbolic Coxeter groups''}, European J. Combin. \textbf{32} (2011), 1299--1316.

\bibitem{Kolpakov}\textsc{A.~Kolpakov}, {``Deformation of finite-volume hyperbolic Coxeter polyhedra, limiting growth rates and Pisot numbers''}, European J. Combin. \textbf{33} (2012), 1709--1724.

\bibitem{KoYu}\textsc{Y.~Komori, T.~Yukita}, {``On the growth rate of ideal Coxeter groups in hyperbolic 3-space''}, Proc. Japan Acad. Ser. A Math. Sci. \textbf{91} (2015), 155--159.

\bibitem{LR}\textsc{S. Lavall\'{e}e, C. Reutenauer}, {``Characteristic polynomials of non-negative integral square matrices and clique polynomials''}, S\'{e}m. Lothar. Combin. \textbf{B61Ac} (2009), 11 pp.

\bibitem{Lind}\textsc{D.~Lind}, {``The entropies of topological Markov shifts and a related class of algebraic integers''}, Ergodic Theory \& Dynamical Systems \textbf{4} (1984), 283--300.

\bibitem{LM}\textsc{D.~Lind, B.~Marcus}, {``An introduction to symbolic dynamics and coding''}. Cambridge University Press, Cambridge, 1995.

\bibitem{LMW}\textsc{J.~Loeffler, J.~Meier, J.~Worthington}, {``Graph products and Cannon pairs''}, Internat. J. Algebra Comput. \textbf{12} (2002), 747--754.

\bibitem{NoKe}\textsc{J.~Nonaka, R.~Kellerhals}, {``The growth rates of ideal Coxeter polyhedra in hyperbolic 3-space''}, Tokyo J. Math. \textbf{40} (2017), 379--391.

\bibitem{Parry}\textsc{W.~Parry}, {``Growth series of Coxeter groups and Salem numbers''}, J.~Algebra \textbf{154} (1993), 406--415.

\bibitem{Pogorelov}\textsc{A. V. Pogorelov}, {``A regular partition of Lobachevskian space''}, Math. Notes \textbf{1} (1967), 3--5.

\bibitem{PV}\textsc{L. Potyagailo, \`{E}. Vinberg}, {``On right-angled reflection groups in hyperbolic spaces''}, Comment. Math. Helv. \textbf{80} (2005), 63--73.

\bibitem{Steinberg}\textsc{R.~Steinberg}, {``Endomorphisms of linear algebraic groups''}, Memoirs of the AMS \textbf{80} (1968).

\bibitem{Umemoto}\textsc{Y.~Umemoto}, {``Growth rates of cocompact hyperbolic Coxeter groups and 2-Salem numbers''}, Alg. Geom. Topol. \textbf{14} (2014), 2721--2746.

\bibitem{Yu1}\textsc{T.~Yukita}, {``On the growth rates of cofinite 3-dimensional hyperbolic Coxeter groups whose dihedral angles are of the form $\pi/m$ for $m = 2, 3, 4, 5, 6$''}, RIMS K\^oky\^uroku Bessatsu \textbf{B66} (2017), 147--166.

\bibitem{Yu2}\textsc{T.~Yukita}, {``Growth rates of 3-dimensional hyperbolic Coxeter groups are Perron numbers''}, Canad. Math. Bull. \textbf{61} (2018), 405--422.

\end{thebibliography}
\end{document}